\newtheorem{thm}{Theorem}[section]
\newtheorem{cor}[thm]{Corollary}
\newtheorem{lem}[thm]{Lemma}
\theoremstyle{definition}
\newtheorem{defn}[thm]{Definition}
\theoremstyle{remark}
\newtheorem{rem}[thm]{Remark}
\numberwithin{equation}{section}
\def\Cb{{\mathbb C}}\def\Gb{{\mathbb G}}\def\Rb{{\mathbb R}}\def\Zb{{\mathbb Z}}
\def\af{{\mathfrak a}}\def\bfr{{\mathfrak b}}\def\lf{{\mathfrak l}}\def\nf{{\mathfrak n}}\def\of{{\mathfrak o}}\def\sf{{\mathfrak s}}
\def\tf{{\mathfrak t}}
\def\gf{{\mathfrak g}}\def\nf{{\mathfrak n}}
\def\Pc{{\mathcal P}}\def\Rc{{\mathcal R}}
\newcommand{\tof}[1]{\stackrel{#1}{\rightarrow}}
\newcommand{\Tof}[1]{\stackrel{#1}{\longrightarrow}}
\newcommand{\lTof}[1]{\stackrel{#1}{\longleftarrow}}
\newcommand{\LG}{G^\vee}
\renewcommand{\b}[1]{\textbf{#1}}
\newcommand{\wt}{\widetilde}
\newcommand{\T}{\mathtt{T}}
\DeclareMathOperator{\Span}{Span}
\DeclareMathOperator{\Hom}{Hom}
\DeclareMathOperator{\End}{End}
\DeclareMathOperator{\ch}{ch}
\begin{document}
\title{T-duality for Langlands dual groups}
\author{Calder Daenzer}
\author{Erik Van Erp}
\email{calder.daenzer@gmail.com}
\email{jhamvanerp@gmail.com}
\maketitle
\begin{abstract} This article addresses the question of whether Langlands duality for complex reductive Lie groups may be implemented by $\T$-dualization.  We prove that for reductive groups whose simple factors are of Dynkin type A, D, or E, the answer is yes.
\end{abstract}

\section{Introduction}
\noindent This article addresses the following question:
\vspace{.1in}
\begin{center}\emph{\ \quad Is the relationship between a complex reductive group and its Langlands dual\\ a form of T-duality?}
\end{center}
\vspace{.1in}

This question arises because a reductive group can be interpreted as a principal torus bundle for a maximal torus, and the maximal tori of Langlands dual groups are canonically $\T$-dual.

Let us make the above question more precise.
Suppose $G$ is a complex reductive group.  Choosing a maximal torus $T\subset G$ allows one to define a root datum of $G$:
\[ \Rc(G,T):=\{ C\subset \Lambda,\ R\subset \Lambda^\perp \}. \]
Here $R$ and $C$ are the sets of roots and coroots of $G$, while $\Lambda^\perp:=\Hom(T,\Cb^\times)$ and $\Lambda:=\Hom(\Cb^\times,T)$ are its character and co-character lattices.

A root datum determines the reductive group $G$ up to isomorphism.
Interchanging the roles of roots and coroots and of the character and co-character lattices results in a new root datum:
\[ \Rc(G,T)^\vee:= \{ R\subset \Lambda^\perp,\ C\subset \Lambda \}.\]
The Langlands dual group of $G$ is the complex reductive group $G^\vee$ (unique up to isomorphism) determined by the dual root datum $\Rc(G,T)^\vee$.

A root datum also implies a choice of maximal torus $T\subset G$ via the canonical isomorphism
\[ T\simeq \Hom(\Lambda^\perp,\Cb^\times), \]
and likewise a natural choice of maximal torus for the Langlands dual group $G^\vee$:
 \[ T^\vee:=\Hom(\Lambda,\Cb^\times)\subset G^\vee. \]
By definition, the  torus $T^\vee$ is the $\T$-dual  of $T$.
\footnote{This is the noncompact $\T$-dual torus $T^\vee\simeq (\Cb^\times)^n$.  To obtain a compact version, which is more common in the $\T$-duality literature, one simply begins with a compact form of $G$ instead of a complex form.}

The translation action of a maximal torus makes $G\to G/T$ a principal torus bundle.  Thus given that the maximal tori of Langlands dual groups are $\T$-dual, it is natural to ask whether $G$ viewed as a principal $T$-bundle, and $G^\vee$ viewed as a principal $T^\vee$-bundle, are $\T$-dual to one another.

A problem that immediately presents itself is that $G\to G/T$ and $G^\vee\to G^\vee/T^\vee$ are torus bundles over different spaces, whereas $\T$-duality is a relationship between torus bundles over a common base space.  However, if the simple factors of $G$ are all of Dynkin type $A$, $D$, or $E$, then the manifolds $G/T$ and $G^\vee/T^\vee$ are isomorphic.
Therefore, for such groups the question of $\T$-duality for Langlands duals is well-defined.

But even if the base manifolds $G/T$ and $G^\vee/T^\vee$ are isomorphic, $G$ and $G^\vee$ only have a hope of being $\T$-dual if we twist the duality by introducing $NS$-fluxes on each space. An $NS$-flux on a principal $T$-bundle is a closed, $T$-invariant, integral 3-form on the total space of the bundle. Indeed, a pair of torus bundles without $NS$-flux are $\T$-dual if and only if both torus bundles are flat, which is generally not the case for the bundles $G\to G/T$ and $G^\vee\to G^\vee/T^\vee$.

The main theorem of this paper asserts (roughly) that when the flag manifolds $G/T$ and $G^\vee/T^\vee$ are isomorphic  there is a natural choice of $NS$-flux on $G$ and on $G^\vee$ such that the two groups are $\T$-dual.  To make this more precise, we should say what the relevant $NS$-fluxes are:

Recall that on every Lie group $G$ there is a natural 3-form called the Cartan 3-form.  It is the left invariant 3-form defined by
\[ H(X,Y,Z):=K(X,[Y,Z]),\qquad X,Y,Z\in \gf,\]
where $K$ is the Killing form of $G$.  This form is indeed integral and closed, so it is an $NS$-flux on $G$.

\begin{thm}[Main Theorem]  Suppose that $G$ is a reductive Lie group whose simple Lie algebra factors are all of type ADE.  Then the principal bundles with $NS$-flux $(G\to G/T,H)$ and $(G^\vee\to G^\vee/T^\vee, H^\vee)$ are $\T$-dual.  Here $H$ and $H^\vee$ denote the Cartan 3-forms of $G$ and $G^\vee$.
\end{thm}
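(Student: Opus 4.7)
The plan is to verify the standard criterion for T-duality of principal torus bundles with flux (in the sense of Bunke--Schick or Bouwknegt--Evslin--Mathai): after identifying the bases, one must produce a $2$-form on the correspondence space $G\times_{G/T}G^\vee$ whose differential equals the difference of pullbacks of $H$ and $H^\vee$ and whose restriction to each $T\times T^\vee$ fiber generates $H^2(T\times T^\vee;\Zb)$. Equivalently, it suffices to show that the fiberwise transgressions $[\int_T H]$ and $[\int_{T^\vee}H^\vee]$ represent, respectively, the Chern classes $c(G^\vee)\in H^2(G/T;\Lambda^\perp)$ and $c(G)\in H^2(G^\vee/T^\vee;\Lambda)$ after identifying the common base.

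I would first reduce to the case of a simple ADE group: a reductive group splits as a product of simple factors and a central torus; all relevant structures behave multiplicatively, and the central torus is trivially self-T-dual. For simple $G$ of type ADE, the defining feature of the simply-laced case is that the root and coroot systems are abstractly isomorphic, giving $\gf\simeq\gf^\vee$ as Lie algebras. Integrating this isomorphism yields an isogeny $\pi:G\to G^\vee$ (possibly after switching the roles of $G$ and $G^\vee$) whose kernel $Z\subset T$ is a finite central subgroup, so $T^\vee=T/Z$. The induced map $G/T\to G^\vee/T^\vee$ is then an isomorphism giving the canonical identification of flag manifolds, and $G^\vee=G\times_T T^\vee$ as a principal $T^\vee$-bundle over $G/T$.

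The key computation is to expand the Cartan $3$-form using the Maurer--Cartan form $\omega\in\Omega^1(G;\gf)$, decomposed as $\omega=\omega_\tf+\omega_\perp$ along the Killing-orthogonal splitting $\gf=\tf\oplus\tf^\perp$. The form $\omega_\tf$ serves as a $T$-invariant connection on $G\to G/T$, whose curvature $F=d\omega_\tf=-\tfrac12[\omega_\perp,\omega_\perp]_\tf$ represents $c(G)$. Writing $H=K(\omega,[\omega,\omega])$ and using Killing-orthogonality of $\tf$ and $\tf^\perp$, the component of $H$ with one vertical and two horizontal legs equals $K(\omega_\tf,[\omega_\perp,\omega_\perp])$; pairing against a basis $\{H_i\}\subset\tf$ yields basic $2$-forms $\beta_i=K(H_i,[\omega_\perp,\omega_\perp])=-2K(H_i,F)$ that descend to $G/T$. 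Thus the fluxes $\beta_i$ are obtained from the curvature $F$ via the Killing duality $\tf\to\tf^*$. The simply-laced hypothesis enters precisely here: for ADE, the Killing form (normalized so that roots have squared length $2$) identifies the coroot lattice $\Lambda$ with the root lattice $\Lambda^\perp$, so the Killing-transpose of $c(G)$ lies integrally in $\Lambda^\perp$ and, via $\pi$, equals $c(G^\vee)$. The dual identity $[\int_{T^\vee}H^\vee]=c(G)$ follows symmetrically.

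The hard part will be the precise matching of normalizations and integrality. The Cartan $3$-form is defined canonically from the Killing form, and one must verify that it produces fluxes representing the dual Chern class exactly---not merely up to a rational scalar---which requires careful bookkeeping of conventions (Killing form vs.\ basic inner product, powers of $2\pi$, integrality of the resulting classes). This is exactly where the ADE hypothesis is essential: for root systems of types B, C, F, or G the Killing form scales short and long roots differently, breaking the lattice identification $\Lambda\simeq\Lambda^\perp$. Once the flux/Chern-class matching is in hand, the Poincaré $2$-form on the correspondence space $G\times_{G/T}G^\vee$ (parametrized as $G\times T^\vee$ via $\pi$) can be constructed as the Killing-form pairing of $\omega_\tf$ with the pullback of a natural connection on $G^\vee\to G^\vee/T^\vee$; verifying that it restricts to the Poincaré class on each $T\times T^\vee$ fiber reduces again to the same Killing identification $\Lambda\simeq\Lambda^\perp$.
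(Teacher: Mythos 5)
Your overall strategy rests on the same mechanism as the paper's proof: the simply-laced identity $\alpha(h_\beta)=\beta(h_\alpha)$ makes the (suitably normalized) Killing form intertwine the two root data, and this is what matches the Cartan $3$-form's transgression with the Chern class of the dual bundle. The paper organizes this differently: it writes down an explicit invariant ``tautological $2$-form'' $F=-\frac{1}{4\pi^2}\sum_{\alpha\in R}\alpha\wedge\alpha^\vee$ on $G\times G^\vee$ (essentially your Killing pairing of the $\tf$-components of the two Maurer--Cartan forms, rewritten as a sum over roots) and verifies the three conditions of the differential-geometric definition directly: fiberwise integrality from $\ker(\exp)=2\pi i\Lambda$, nondegeneracy by a Weyl-group argument giving $F_0(h_\beta)=-\frac{1}{8\pi}K(h_\beta,h_\beta)h_\beta$, and $dF=\pi^*H-(\pi^\vee)^*H^\vee$ by evaluation on an explicit spanning set of the tangent space of the correspondence space. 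Your transgression formulation is conceptually clean, but for tori of rank greater than one the cohomological matching $[\int_TH]=c(G^\vee)$ is necessary rather than sufficient for the paper's definition of $\T$-duality: one must still exhibit the invariant $2$-form with a nondegenerate integral fiberwise pairing, which you only sketch, and the exact (not merely rational) matching of normalizations --- which you correctly identify as the crux --- is precisely the computation you defer.

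Beyond that deferral, there are two concrete gaps. First, the isogeny $\pi\colon G\to G^\vee$ you invoke need not exist in either direction: take $G=SL_{12}(\Cb)/\mu_4$, whose Langlands dual is $SL_{12}(\Cb)/\mu_3$; the fundamental groups are $\Zb/4$ and $\Zb/3$, neither of which embeds in the other, so neither group covers the other. Consequently $T^\vee\neq T/Z$ and the parametrization $G^\vee=G\times_TT^\vee$ fails. The identification of flag manifolds must instead come from the Lie algebra isomorphism $\gf\simeq\gf^\vee$ (equivalently, through the common universal cover), as in the paper's Lemma. Second, the claim that the normalized Killing form ``identifies the coroot lattice $\Lambda$ with the root lattice $\Lambda^\perp$'' conflates $\Lambda$ (the cocharacter lattice, which may strictly contain the coroot lattice) with the coroot lattice: already for $G=SU(2)$ the basic inner product sends $\Lambda=\Zb h_\alpha$ onto $\Zb\alpha$, an index-$2$ sublattice of the cocharacter lattice of $SO(3)$. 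The Chern classes do still match (because $c(SO(3)\to S^2)$ is even), but the reason is the root-datum duality $h_\alpha\mapsto\alpha$ rather than a unimodular lattice identification, and your argument as stated would not detect a discrepancy by an integer factor. Both gaps are patchable, but the second sits exactly at the step you acknowledge is the hard part, so the proposal as written does not yet constitute a proof.
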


\vskip 6pt
\noindent {\bf Interpretations.}
Philosophically, there are two possible interpretations of our main theorem:
\begin{enumerate}
\item Langlands duality is an example of $\T$-duality for principal torus bundles.
\item $\T$-duality for a single torus is an example of Langlands duality.
\end{enumerate}
The first is of course the direct interpretation.  To arrive at the second viewpoint, observe that the Cartan 3-form vanishes on a torus, so that the theorem applied to $G=T$ reduces to the duality between a torus and its $\T$-dual.

Viewing Langlands duality as a generalization of $\T$-duality for a single torus leads to a question for future study: just as the duality relation on tori exhibits new phenomena when studied in families (for instance the appearance of gerbes and noncommutative duals), Langlands duality should exhibit new features in families as well.  In other words, it would be natural to study duality for fibrations with fiber isomorphic to $(G\to G/T,H)$, and the effect of Langlands dualization on such objects.

\vskip 6pt
\noindent {\bf $\T$-duality and Langlands duality in the literature.}
In this article we work in the context of differential geometric T-duality.  This duality induces isomorphisms of (T-invariant) twisted deRham complexes and twisted generalized geometrical structures on the T-dual spaces \cite{GC}, but does not provide a K-theoretic isomorphism.  After our first preprint appeared, Bunke and Nikolaus \cite{BN} were led to study the topological analogue of this problem, which provides the isomorphism of K-theory.

$\T$-duality and Langlands dual groups have also appeared side by side in the semi-classical limit of the geometric Langlands conjecture (\cite{DP},\cite{HT}).  However, that is a very different use of $\T$-duality than what appears in this work: in the semi-classical Langlands context, $\T$-duality relates the Hitchin fibrations of $G$-Higgs and $G^\vee$-Higgs bundles over a common Hitchin base, whereas for us, $\T$-duality directly implements the relation between a group and its Langlands dual.  Thus, while we do expect that there is a connection between these two applications of $\T$-duality, it is certainly not a direct one.

\vskip 6pt
\noindent\b{Outline.} In Sections \eqref{S:Langlands} and \eqref{S:T-duality} we recall the relevant definitions and set up the notation surrounding Langlands duality and T-duality.  In Section \eqref{S:MainTheorem} we prove the main results.
\vskip 6pt
\noindent\b{Acknowledgements.} The authors would like to thank Nigel Higson, Jonathan Block, Ping Xu, and Mathai Varghese for helpful discussions and insights.  We would also like to thank Ulrich Bunke for pointing out an error in an earlier draft of this paper.

\section{Langlands duality}\label{S:Langlands}
Langlands duality for complex reductive groups is a classical notion, but its central constructions, especially for the case of non-semisimple groups, are somewhat scattered in the literature.  Thus for the convenience of the reader we will collect in this section the basic definitions and constructions that are relevant for our purposes.  Further details may be found in \cite{Che}, \cite{SGA}, and \cite{Ste}.
\subsection{Reductive Lie algebras}
Let $\gf$ be a finite dimensional Lie algebra over the field $\Cb$ of complex numbers.
The Lie algebra $\gf$ is {\em reductive} if to each ideal $\af$ in $\gf$ corresponds an ideal $\bfr$ in $\gf$ with $\gf=\af\oplus\bfr$.
The radical of $\gf$ is the maximal solvable ideal in $\gf$, denoted ${\rm rad}\,\gf$.
If $\gf$ is reductive, then ${\rm rad}\,\gf$ is the center of $\gf$, and
\[ \gf = {\rm rad}\gf \oplus [\gf, \gf]\]
while the derived Lie algebra $[\gf, \gf]$ is semisimple.
Any Lie algebra of complex matrices   that is closed under the operation conjugate transpose is reductive, which includes the classical Lie algebras $\gf\lf(n,\Cb)$, $\sf\lf(n,\Cb)$, $\sf\of(n,\Cb)$, etc.

Let $\tf$ denote a maximal abelian subalgebra of a reductive Lie algebra $\gf$.
(The choice of $\tf$ is unique up to an inner automorphism of $\gf$.)
Note that $\tf\supset {\rm rad}\gf$.
The adjoint representation ${\rm ad}\colon \gf\to {\rm End}\,\gf$ restricted to $\tf$ is diagonalizable,
and we have a decomposition
\[ \gf = \tf \oplus \bigoplus_{\alpha\in R} \gf_\alpha\]
Here  $R\subset \tf^*$ denotes the set of roots of $\gf$, while $\gf_\alpha$ is the eigenspace for the root $\alpha\in R$, that is,
\[ X\in \gf_\alpha \iff [h,X] = \alpha(h)X,\quad \forall h\in \tf.\]
The Killing form $K(-,-)$ is the complex bilinear form on $\gf$ defined by
\[ K(X,Y) = {\rm Tr}\,({\rm ad}X\, {\rm ad}Y).\]
The Killing form is non-degenerate when restricted to the semisimple part $[\gf, \gf]$.
This allows us to define a {\em coroot} $h_\alpha\in \tf$ for every root $\alpha \in \tf^*$ as the unique element in $[\gf,\gf]\cap \tf$ satisfying
\[ \alpha=2\frac{K(-,h_\alpha)}{K(h_\alpha,h_\alpha)}. \]
We denote the set of coroots by $C\subset \tf$.
Note that the coroots span $\tf$ as a complex vector space if and only if $\gf=[\gf,\gf]$, that is, if $\gf$ is semisimple.

\subsection{Reductive Lie groups}
Complex semisimple Lie algebras are classified by root systems (equivalently, by Dynkin diagrams, or by Cartan matrices).
This immediately implies a classification of reductive Lie algebras because of the decomposition $\gf={\rm rad}\, \gf\oplus [\gf, \gf]$.
Chevalley showed that semisimple {\em groups} are classified if we consider the root system as a subset of the {\em character lattice} of a maximal torus  \cite{Che}.
Demazure introduced the notion of a {\em root datum}, comprising roots and character lattice as well as coroots and cocharacter lattice, as an elegant tool for classifying {\em reductive} groups \cite{SGA}.
We briefly review these notions here.\footnote{The classification of reductive groups by root data is typically developed for {\em algebraic} groups.
However, over the field of complex numbers, every reductive Lie group is reductive algebraic. In fact, the category of complex reductive Lie groups is equivalent to the category of complex reductive algebraic groups (see e.g. \cite{Lee}).}

The radical of a Lie group
is its maximal connected normal solvable subgroup.
A complex Lie group is {\em semisimple} if it is connected and its radical is trivial,
and {\em reductive} if it is connected and its radical is a torus.\footnote{The precise definition of ``reductive Lie group'' varies among authors, depending on the context and purpose of the definition.  Knapp, for example, allows for finitely many connected components \cite{Kna}, and adds a number of extra requirements.  We follow Steinberg \cite{Ste}, whose definition is suitable for the purpose of classification by root data.}
In this context, the term ``torus'' refers to a complex Lie group that is isomorphic to the direct product of copies of the multiplicative group $\Cb^\times=\Cb\setminus \{0\}$.

Let $G$ be a complex reductive Lie group with Lie algebra $\gf$.
If $T\subset G$ is a maximal torus in $G$, then the Lie algebra $\tf$ of $T$ is maximal abelian in $\gf$,
and we have corresponding sets of roots $R\subset \tf^*$ and coroots $C\subset \tf$ relative to  $T$.
The {\em character lattice} is the free abelian group
\[ \Lambda^\perp = \Hom(T, \Cb^{\times})\]
where $\Hom$ refers to morphisms of complex Lie groups.
The standard identification of the Lie algebra of $\Cb^{\times}$ with $\Cb$ via the  exponential ${\rm exp}\colon \Cb\to \Cb^\times, z\mapsto e^z$ gives an inclusion
\[ \Lambda^\perp=\Hom(T, \Cb^\times)\subset \Hom(\tf, \Cb) = \tf^*.\]
Note that, by definition, one has $R\subset \Lambda^\perp$.
Likewise, the {\em cocharacter lattice} of $T\subset G$ is the free abelian group
\[ \Lambda = \Hom(\Cb^\times,T) \subset \Hom(\Cb, \tf) = \tf,\]
and it can be shown that the coroots satisfy $C\subset \Lambda$.

A canonical pairing between characters and cocharacters
\[ \Lambda \times \Lambda^\perp \to \Zb\]
is defined in the obvious way by composition
\[ \Hom(\Cb^\times, T) \times \Hom(T, \Cb^\times) \to \Hom(\Cb^\times, \Cb^\times) = \Zb,\]
and this agrees with the pairing of $\tf$ and its dual vector space $\tf^*$.
The pairing between $\Lambda$ and $\Lambda^\perp$ is perfect, making $\Lambda^\perp$ the dual lattice of $\Lambda$.  Note that the kernel of the exponential map $\tf\to T$ is the lattice $2\pi i \Lambda$.

The {\em root datum} of the connected complex Lie group $G$ with maximal torus $T$ is the data
\[ \Rc(G,T):=\{ R\subset\Lambda^\perp, C\subset\Lambda\},\]
which implicitly also includes the pairing $\Lambda\times \Lambda^\perp\to \Zb$
and the bijection $R\to C, \alpha\mapsto h_\alpha$ between roots and coroots.
Root data classify complex reductive Lie groups, in the sense that two such groups are isomorphic if and only if their root data are isomorphic (in the obvious sense)
\cite{SGA}, \cite{Ste}.

\subsection{Abstract root data}
For every abstract root datum there exists a unique complex reductive Lie group whose root datum $\{ R\subset\Lambda^\perp, C\subset\Lambda\}$ is isomorphic to the given abstract root datum.
For this statement to make sense, we must specify axioms for an abstract root datum.
The definition is due to Demazure \cite{SGA}.

\begin{defn}\label{defn_root_datum}
An \b{abstract root datum} consists of a lattice $\Lambda$ of finite rank, its dual lattice $\Lambda^*$, and finite subsets $X\subset \Lambda$, $X^*\subset \Lambda^*$ in bijective correspondence $x\mapsto x^*$, such that
\begin{enumerate}
\item $\langle x, x^*\rangle = 2$ for all $x\in X$,
\item for all $y\in X$ the map $x\mapsto x-\langle x, y^*\rangle x$ permutes the elements of $X$; likewise, for all $y^*\in X^*$ the map $x^*\mapsto x^*-\langle y, x^*\rangle x^*$ permutes the elements of $X^*$,
\item if $x\in X$ and $cx\in X$ then $c=\pm 1$.
\end{enumerate}
\end{defn}
If $G$ is a complex reductive group, then the root datum constructed in the previous section satisfies the axioms of Definition \ref{defn_root_datum}.
Conversely, for every abstract root datum \linebreak $\{ X^*\subset \Lambda^*, X\subset \Lambda\}$ there exists a unique-up-to-isomorphism complex reductive group whose root datum is isomorphic to the given one.

To prove existence of a reductive group for given abstract root datum, first of all one checks that the conditions of an abstract root datum ensure that $X^*$ is, in fact, a root system (\cite{SGA} Exp.\ XXI).
The construction of the semisimple Lie algebra that corresponds to the root system $X^*$ is described in most introductory texts on semisimple Lie algebras (for example \cite{Kna}, \cite{Ser}).
The construction of a reductive Lie algebra from a root datum is a minor modification of this construction, but usually not discussed in the standard texts. We therefore briefly review this construction, referring for details to \cite{SGA}.

Given an abstract root datum $\{X^*\subset \Lambda^*,X\subset\Lambda\}$, the maximal abelian subalgebra $\tf$ of the reductive Lie algebra $\gf$ that we wish to construct is the complex vector space
\[ \tf := \Lambda \otimes_\Zb \Cb\]
The duality between $\Lambda$ and $\Lambda^*$ allows us to identity $\tf^* =  \Lambda^* \otimes_\Zb \Cb$ with the vector space dual of $\tf$.
The radical ${\rm rad}\gf$ of $\gf$ is the subspace of $\tf$ that is perpendicular to the set of roots,
\[ {\rm rad}\,\gf =\{h\in \tf \mid \langle h, x^* \rangle = 0, \forall x \in X^*\}.\]
If $\tf_{ss}=\Span_\Cb\{X\}$ denotes the subspace of $\tf$ spanned by the coroots, then
\[ \tf = {\rm rad}\,\gf \oplus \tf_{ss}.\]
Let $\gf_{ss}=\tf_{ss}\oplus \bigoplus_{\alpha\in X^*}\gf_\alpha$ be a {\em semisimple} Lie algebra with maximal abelian subalgebra $\tf_{ss}$, coroots $X\subset \tf_{ss}$ and roots $X^*$ (the latter interpreted as a subset of $\tf_{ss}^*$).
Then the desired reductive Lie algebra is the direct sum
\[ \gf := {\rm rad}\,\gf \oplus \gf_{ss}\]
Lie's third theorem provides a connected, simply-connected complex Lie group $\wt{G}$ (unique up to isomorphism)
corresponding to  $\gf$.
Let
\[L:=\exp(2\pi i\Lambda)\]
denote the discrete subgroup of $\wt{G}$ obtained by exponentiating the cocharacter lattice.
Then the quotient $G:=\wt{G}/L$
is the desired reductive Lie group with root datum $\{X^*\subset\Lambda^*,X\subset \Lambda\}$.

In summary, there is a one-to-one correspondence between isomorphism classes of (connected) complex reductive Lie groups and isomorphism classes of abstract root data.

\subsection{Langlands duality}
It is clear from the symmetric phrasing of Definition \ref{defn_root_datum} that if\linebreak $\{X^*\subset\Lambda^*, X\subset \Lambda\}$ is an abstract root datum, so is its dual $\{X\subset\Lambda, X^*\subset \Lambda^*\}$.
Two connected reductive complex Lie groups $G, G^\vee$ are \b{ Langlands duals} if the root datum of $G^\vee$ is isomorphic to the dual of the root datum of $G$.

\section{$\T$-duality}\label{S:T-duality}
We intend to exhibit a $\T$-duality between Langlands dual groups that are viewed as principal $(\Cb^\times)^n$-bundles via the translation actions of their maximal tori.  However, the standard differential geometric formulation of $\T$-duality is for $U(1)^n$-bundles.  To skirt this apparent incompatibility, we will, when convenient, view Langlands duality as a relation between compact real forms of reductive groups.  This does not lose any information since one recovers the original form by complexification, and it places us in the standard $\T$-duality setup since a principal $(\Cb^\times)^n$-bundle structure on the complex group determines a $U(1)^n$-bundle structure on the compact real form.

Thus in this section we will recall the standard definitions and notation for differential geometric $\T$-duality, phrased for compact tori.

Let $T=V/\Lambda\simeq \Rb^n/\Zb^n$ be a torus, expressed as a vector space $V$ modulo a full rank lattice $\Lambda$.  By definition, the $\T$-dual torus is
\[ T^\vee:=\Hom(\Lambda, U(1))\simeq V^*/\Lambda^\perp.\]
Here $\Lambda^\perp\subset V^*$ denotes the lattice of linear functionals on $V$ which send $\Lambda$ to the integers.\footnote{We choose to view $\Hom(\Lambda,U(1))$ as the definition of the dual.  It is isomorphic to $V^*/\Lambda^\perp\to\Hom(\Lambda,U(1))$ via the map $v^*\mapsto e^{2\pi i v^*(-)}$.  However, for algebraic tori $\Gb_m^n$, the two duals are non-isomorphic, and we would want the dual to be the multiplicative torus.  Thus in the algebraic case we would set $\Lambda:=\Hom(\Gb_m,T)$, and $T^\vee=\Hom(\Lambda,\Gb_m)$.}

There is a line bundle with connection $(\Pc,\nabla_\Pc)$ on $T\times T^\vee$ (the Poincar\'e line bundle), which is used to transport geometric data from $T$ to $T^\vee$.  The total space of $\Pc$ may be defined as the quotient:
\begin{equation}\label{PoncareLineBundle} \Pc:= (V\times T^\vee\times \Cb)/(v,\phi,z)\sim (v+\lambda,\phi,\phi(\lambda)z)\qquad \lambda\in \Lambda. \end{equation}

The connection $\nabla_\Pc$ is defined by the condition that its pullback to the trivial bundle \linebreak $V\times T^\vee\times \Cb$ equals $d+\omega$. Here $\omega$ denotes the tautological 1-form (also called the Liouville 1-form) that one has on $V\times T^\vee$ after identifying it with the cotangent bundle of $T^\vee$.

In applications to derived categories, the Poincar\'e bundle plays a primary role.  However, in the differential geometric setup one is content to forget about the bundle and only remember its curvature form $F_\Pc$ which is ($i/2\pi$-times) the standard integral symplectic form on $T\times T^\vee$.  This curvature defines a Chern-Weil character $\ch(\nabla_\Pc)\in\Omega^{2*}(T\times T^\vee)$, which induces an isomorphism of de Rham cohomology and also of the translation invariant deRham complexes, via the formula: 
\begin{align} H^\bullet_{DR}(T)\simeq \Omega^\bullet(T)^T\Tof{\sim} \Omega^\bullet(T^\vee)^{T^\vee}\simeq H_{DR}^\bullet(T^\vee) \\
\label{E:t-duality} \Omega^\bullet(T)^T\ni \omega\longmapsto \int_T\omega\wedge \ch(\nabla_\Pc)\in\Omega^\bullet(T^\vee)^{T^\vee}. 
\end{align}

The Poincar\'e line bundle also induces isomorphisms between several other sorts of data, such as (invariant) generalized complex structures, $K$-theory, and (in the complex case) derived categories.

Now let us proceed to the definition of $\T$-duality for principal torus bundles.  It turns out that this generalization is not perfectly straightforward: global non-triviality of a principal bundle results in a defect on the dual side, which is accounted for using a 3-form on the dual bundle known as an $NS$-flux (the necessity of this 3-from can be deduced from several perspectives, we refer the reader for example to \cite{BS},\cite{BHM},\cite{MR} or \cite{Dae} for details).  Despite this twist, it is still possible to keep the perspective that duality is implemented by a 2-form $F$ which plays the role of the curvature of a Poincar\'e bundle.

Here are the definitions:
\begin{defn} An \textbf{NS-flux} (also sometimes called an H-flux) on a principal torus bundle $P$ is a closed, integral, $T$-invariant 3-form $H\in \Omega^3(P)$.\end{defn}
Let $M$ be the base of a principal $T$-bundle $P$, and let $P^\vee$ be a principal $T^\vee$-bundle on $M$, and define the projections:
\[ P\lTof{\pi}P\times_M P^\vee\Tof{\pi^\vee} P^\vee. \]

\begin{defn}\label{D:T-duality} A \textbf{$\T$-duality} between a principal $T$-bundle with $NS$-flux $(P,H\in\Omega^3(P))$ and a principal $T^\vee$-bundle with $NS$-flux $(P^\vee,H^\vee\in\Omega^3(P^\vee))$ is the data of a $T\times T^\vee$-invariant 2-form $F\in \Omega^2(P\times_M P^\vee)^{T\times T^\vee}$ satisfying
\begin{enumerate}
\item $dF=\pi^*H-(\pi^\vee)^*H^\vee$.
\item For each $m\in M$, the restriction $F_m$ to the fiber $P_m\times P^\vee_m$ has integral periods.
\item Let $\tf_M$ and $\tf^\vee_M$ denote the vertical tangent bundles of $P$ and $P^\vee$, then $F$ restricts to a nondegenerate pairing:
\[ F:\tf_M\otimes \tf_M^\vee\to\Rb. \]
\end{enumerate}
We will refer to $F$ as a \textbf{dualizing 2-form}.
\end{defn}
Note that $F_m$ is automatically closed (this follows because $F_m$ is translation invariant and $T\times T^\vee$ is abelian), and thus conditions (2) and (3) imply that $F_m$ is an integral symplectic form on $P_m\times P_m^\vee$.  In particular, $F_m$ is the curvature of some connection on a line bundle over $P_m\times P^\vee_m$.
On the other hand, $F$ itself is not a closed form, so it is not the curvature of any line bundle on $P\times_M P^\vee$.  Rather, in this global setting one has a family over $M$ of line bundles, which fits together to a morphism of gerbes with connection, whose 2-curving is $F$.

Using $e^{\frac{i}{2\pi}F}$ as a proxy for the Chern character of a connection in Equation \eqref{E:t-duality}, one produces an isomorphism of $\Zb/2\Zb$-graded chain complexes (\cite{BHM},\cite{GC})
\[ (\Omega^\bullet(P)^T,d_{DR}+H\wedge\cdot)\Tof{\sim} (\Omega^\bullet(P^\vee)^{T^\vee},d_{DR}+H^\vee\wedge\cdot) \]
which induces an isomorphism of twisted de Rham cohomology $H^\bullet(P,d+H)\simeq H^\bullet(P^\vee,d+H)$.  A T-duality also implies a bijection between the translation invariant twisted generalized geometrical structures on the two sides (\cite{GC} Theorem 3.1).  There are associated maps in twisted K-theory, and twisted derived categories in the complex case, though these maps are only isomorphisms when $F$ satisfies an extra unimodularity condition.

\section{T-duality for Langlands dual groups}\label{S:MainTheorem}
Now that the tools are in place, we will describe how Langlands duality may be regarded as an instance of $\T$-duality.  We should remark that the T-duality between $SU(2)$ and its Langlands dual $SO(3)$ was first calculated in \cite{BEM}, where it appeared as an example of $\T$-duality between Lens spaces. \newline

\noindent\textbf{Notation:} In this section $G$ will denote the compact form of a reductive Lie group $G$, and $T\simeq U(1)^n$ a maximal torus in $G$. The complex groups will be denoted $G_\Cb$ and $T_\Cb$.
\begin{lem} Let $G$ be a compact Lie group containing no simple factors of type $B_n$ or $C_n$. Then a Langlands dual $G^\vee$ also has this property, and the flag manifolds $G/T$ and $G^\vee/T^\vee$ are isomorphic.
\end{lem}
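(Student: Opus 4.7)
The plan is to reduce to the case where $G$ is simple and then appeal to the classification of simple root systems, exploiting the fact that the flag manifold is an isogeny invariant.

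First I would reduce to the simple case. A compact connected reductive group $G$ admits a finite central cover of the form $Z \times G_1 \times \cdots \times G_k$, where $Z$ is the identity component of the center (a torus) and each $G_i$ is simple. With a compatible choice of maximal torus $T = Z \times T_1 \times \cdots \times T_k$, the central factor is absorbed into $T$, giving
\[ G/T \cong \prod_{i=1}^{k} G_i/T_i. \]
Since the root datum of a product is the product of root data, Langlands duality commutes with this decomposition, and $G^\vee$ admits an analogous splitting with simple factors $G_i^\vee$. Hence both assertions reduce to the case where $G$ is simple.

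Next I would invoke the classification of simple root systems and their Langlands duals: $A_n^\vee \cong A_n$, $D_n^\vee \cong D_n$, $E_i^\vee \cong E_i$ for $i = 6, 7, 8$, $F_4^\vee \cong F_4$, $G_2^\vee \cong G_2$, while $B_n^\vee \cong C_n$ and $C_n^\vee \cong B_n$. This settles the first claim: if $G$ contains no $B_n$ or $C_n$ factor, neither does $G^\vee$.

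For the second claim, the key observation is that $G/T$ depends only on the Lie algebra $\gf$. Indeed, any isogeny between compact connected Lie groups has central kernel, which lies in every maximal torus, so the quotient $G/T$ is unchanged by passing to a finite cover or quotient. Hence $G/T$ depends only on the isomorphism class of the root system. For each allowed simple type, the root system is isomorphic to its dual: in the $A$, $D$, $E$ types all roots have the same length, so coroots are obtained from roots by a single rescaling, yielding $R \cong R^\vee$; for $F_4$ and $G_2$ the self-duality is furnished by the classical long--short root exchange, which preserves all Cartan integers. It follows that $\gf \cong \gf^\vee$ as Lie algebras, and so $G/T \cong G^\vee/T^\vee$.

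The step I expect to require the most care is the invariance of $G/T$ under isogeny, together with the identification of the maximal torus $T^\vee \subset G^\vee$ produced by the root-datum construction with a maximal torus in some compact form of $\gf^\vee$. Both follow cleanly from the construction of $G^\vee$ recalled in Section~\ref{S:Langlands}, but they deserve to be stated explicitly since Langlands duality typically does not preserve isogeny class (for instance, the dual of a simply connected group is adjoint).
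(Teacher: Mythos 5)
Your proposal is correct and follows essentially the same route as the paper: both arguments rest on the classification fact that dualization fixes the simple types other than $B_n\leftrightarrow C_n$ (the paper phrases this via transposition of Cartan matrices), and on the observation that $G/T$ depends only on the Lie algebra $\gf$. The paper justifies that last point by identifying $G/T$ with the variety of Borel subalgebras (equivalently via $\wt{G}/\wt{T}\simeq G/T$), which is the same isogeny-invariance you invoke, just packaged without the explicit product decomposition.
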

\begin{proof}
The effect of Langlands dualization on a Cartan matrix, whose entries are the numbers $\alpha(h_\beta)$ indexed over a set of simple roots, is simply to transpose the matrix.  For simple Lie algebras of type ADE the Cartan matrix is symmetric, and for $G_2$ and $F_4$ the transpose corresponds to a different ordering of simple roots.  However, transposition of Cartan matrices interchanges the types $B_n$ and $C_n$.  In other words, the effect of dualization on simple Lie alegebras is:
\[ A_n\leftrightarrow A_n,\ B_n\leftrightarrow C_n,\ D_n\leftrightarrow D_n,\ E_n\leftrightarrow E_n,\ G_2\leftrightarrow G_2,\ F_4\leftrightarrow F_4.\]
Only for type $B_n$ and $C_n$ does dualization change the isomorphism type of a simple Lie algebra, thus for reductive Lie algebras containing no factors of type $B_n$ or $C_n$, Langlands dualization results in an isomorphic Lie algebra.

Now suppose $\gf$ has no $B_n$ or $C_n$ factors and choose an isomorphism $\gf\tof{\phi}\gf^\vee$.  This determines an isomorphism of the flag manifolds $G/T\simeq G^\vee/T^\vee$ since the flag manifold is isomorphic to the moduli space of Borel subalgebras of $\gf$.  Alternatively, one obtains an isomorphism $G/T\simeq G^\vee/T^\vee$ by exponentiating $\phi$ to an isomorphism between the universal covers $\wt{G}\tof{\sim}\wt{G^\vee}$, which induces an isomorphism of flag manifolds since $\wt{G}/\wt{T}\simeq G/T$, where $\wt{T}$ denotes the inverse image of $T$ under the covering map $\wt{G}\to G$.
\end{proof}
\begin{rem} The flag manifolds of groups of type $B_n$ and $B_n^\vee=C_n$ cannot be isomorphic.  In fact their higher homotopy groups are non-isomorphic, as can easily be seen by comparing the long exact sequences of homotopy groups associated to $T\to G\to G/T$ and $T^\vee\to G^\vee\to G^\vee/T^\vee$.
\end{rem}

Next, recall that on every Lie group $G$, one has a natural $3$-form called the \b{Cartan 3-form}.
It is the left invariant $3$-form $H\in \Omega^3(G)^G$ defined by
\[ H(X,Y,Z):=-\frac{1}{4\pi^2} K(X,[Y,Z]),\qquad X,Y,Z\in \gf,\]
where $K$ is the Killing form of $G$.

Note that if $H$ is the Cartan form of a compact group $G$, then extending $H$ to $\gf\otimes\Cb$ by complex linearity produces the Cartan form of $G_\Cb$.

\begin{lem} Let $G$ be a compact Lie group, then the Cartan 3-form $H$ is an $NS$-flux on the the principal $T$-bundle $G\to G/T$.
\end{lem}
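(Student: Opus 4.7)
The plan is to verify the three defining conditions of an $NS$-flux on the principal $T$-bundle $G\to G/T$: that $H$ is closed, $T$-invariant, and represents an integral de Rham class.

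For $T$-invariance I would observe that $H$ is actually bi-invariant, not merely left-invariant. The underlying alternating trilinear form $(X,Y,Z)\mapsto K(X,[Y,Z])$ on $\gf$ is $\mathrm{Ad}$-invariant because $K$ is $\mathrm{Ad}$-invariant and the Lie bracket is $\mathrm{Ad}$-equivariant. Bi-invariance implies in particular that $H$ is fixed by the right translation action of $T$ that gives $G\to G/T$ its principal bundle structure.

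For closedness I would apply the Chevalley--Eilenberg formula for the differential on left-invariant forms. Expanding $dH(X_0,X_1,X_2,X_3)$ yields the six terms $\pm K([X_i,X_j],[X_k,X_l])$ with $\{i,j,k,l\}=\{0,1,2,3\}$; using the symmetry of $K$ to pair complementary index pairs, closedness reduces to the single identity
\[
K([X_0,X_1],[X_2,X_3]) - K([X_0,X_2],[X_1,X_3]) + K([X_0,X_3],[X_1,X_2]) = 0.
\]
This follows by rewriting $K([A,B],C)=K(A,[B,C])$ (a direct consequence of the $\mathrm{ad}$-invariance and symmetry of $K$) so that the left-hand side becomes $K(X_0,\,[X_1,[X_2,X_3]]-[X_2,[X_1,X_3]]+[X_3,[X_1,X_2]])$, which vanishes by the Jacobi identity.

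For integrality I would use the reductive decomposition $\gf=\mathrm{rad}\,\gf\oplus\bigoplus_i\gf_i$ into the center and the simple ideals, which is orthogonal with respect to $K$ and on which $K$ vanishes on $\mathrm{rad}\,\gf$. Consequently $H$ is the sum of pullbacks of the Cartan $3$-forms $H_i$ on the compact simple factors $G_i$. For each compact simple group $G_i$ one has $H^3(G_i;\Zb)\cong\Zb$, and it is a classical fact that the Cartan $3$-form, with the normalization $-\frac{1}{4\pi^2}K(-,[-,-])$, represents an integer multiple of the generator; integrality of $H$ follows by summing. The main obstacle is this last step: closedness and bi-invariance are immediate Lie-algebraic facts, but verifying that the prefactor $-\frac{1}{4\pi^2}$ together with the Killing form yields integer periods requires matching this normalization against the known integral generators of $H^3(G_i;\Zb)$ for each simple type.
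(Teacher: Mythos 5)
Your proposal is correct and follows essentially the same route as the paper, whose entire proof is the one-line assertion that the form is ``well known to be closed and integral'' and is $G$-invariant (hence $T$-invariant) by construction; you simply supply the standard verifications (bi-invariance from $\Ad$-invariance of $K$, closedness from the Jacobi identity, integrality from the classical normalization of the generator of $H^3$ of a compact simple group). The one point to watch is that a general compact $G$ is only a finite central quotient of $Z_0\times\prod_i G_i$ rather than a direct product, so ``$H$ is a sum of pullbacks of the $H_i$'' literally holds on that finite cover, and integrality must still descend to $G$ itself --- but this is subsumed in the same classical fact about the Killing-form normalization that both you and the paper ultimately invoke.
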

\begin{proof} The 3-form is well known to be closed and integral, and it is $G$-invariant (thus $T$-invariant) by construction.
\end{proof}

\subsection{A tautological 2-form.} Here we describe a 2-form which is defined on the product of any reductive Lie group with its Langlands dual.  It will be used to prove the $T$-duality relation on Langlands dual groups.

The decomposition $\gf=\tf\oplus \nf$ (where $\nf=\bigoplus_{\alpha\in R} \gf_\alpha$) is orthogonal with respect to the Killing form $K$.
This allows us to extend any root $\alpha\in R\subset \tf^*$ to a linear form (still denoted $\alpha$) on all of $\gf$.
The extended root may be written as:
\[ \alpha=2\frac{K(-,h_\alpha)}{K(h_\alpha,h_\alpha)}\in \gf^* \]
This extension allow roots $\alpha\in \gf^*\simeq \Omega^1(G)^G$ to be viewed as left translation-invariant 1-forms on $G$.
Similarly, for the Langlands dual $G^\vee$, the roots $\alpha^\vee\in (\gf^\vee)^*$ determine $G^\vee$-invarinant 1-forms on $\alpha^\vee\in\Omega^1(G^\vee)^{G^\vee}$.

Note that the the bijection $\alpha\leftrightarrow h_\alpha$ between roots and coroots, when composed with the Langlands identification $C=R^\vee$, gives a preferred bijection between the roots of $G$ and the roots of $G^\vee$:
\[ R\tof{\sim}C= R^\vee,\qquad \alpha\to h_\alpha=\alpha^\vee. \]
For us, $\alpha^\vee$ will always denote the image of $\alpha$ under this bijection.  Similarly, one has a preferred bijection $C\to R= C^\vee$ which we denote by $h_\alpha\mapsto h_\alpha^\vee$.  It is easy to verify that $h_\alpha^\vee=h_{\alpha^\vee}$.

\begin{defn}\label{D:Cartan2} Let $G$ be a complex reductive Lie group with Langlands dual $G^\vee$.
We define the \b{tautological 2-form} on $G\times G^\vee$ as:
\[ F:=-\frac{1}{4\pi^2} \sum_{\alpha\in R}\alpha\wedge \alpha^\vee\in \Omega^2(G\times G^\vee)^{G\times G^\vee}. \]
\end{defn}

\subsection{$\T$-duality between Langlands dual groups} We are finally ready to prove the $T$-duality statement.
We denote the Cartan 3-form on $G$ by $H$ and the Cartan 3-from on the Langland's dual $\LG$ by $H^\vee$.  We say a reductive group is of \b{ADE-type} if its Lie algebra is the sum of an abelian Lie algebra and simple Lie algebras of type A, D, or E.

\begin{lem}\label{L:GoodIsom} Let $G$ be a group of $ADE$-type.  Then there exists an isomorphism of Lie algebras $\gf\tof{\phi}\gf^\vee$ which sends each coroot $h_\alpha\in \tf\subset\gf$ to the coroot $h_\alpha^\vee\in\tf^*\subset\gf^\vee$.
\end{lem}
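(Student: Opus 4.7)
The plan is to reduce to the case of a single simple ADE factor and then exploit the fact that, in the simply-laced setting, the assignment $\alpha \mapsto h_\alpha$ is the restriction of a linear map to the root lattice.

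First I would decompose $\gf = \af \oplus \bigoplus_i \gf_i$ into its center $\af$ and simple ADE ideals $\gf_i$. The Langlands dual decomposes correspondingly as $\gf^\vee = \af^\vee \oplus \bigoplus_i \gf_i^\vee$, where $\af^\vee$ is abelian of the same dimension as $\af$ (since the Lie algebras of a torus $T$ and of $T^\vee$ are the cocharacter and character lattices of $T$ tensored with $\Cb$, both of complex dimension $\dim T$). Because $\af$ contains no roots, any linear isomorphism $\af \tof{\sim} \af^\vee$ trivially satisfies the coroot condition, so it suffices to construct an isomorphism $\phi_i : \gf_i \tof{\sim} \gf_i^\vee$ for each simple ADE factor sending $h_\alpha \mapsto h_\alpha^\vee$ for every root $\alpha$ of $\gf_i$.

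Next, fix a simple ADE Lie algebra $\gf$, choose simple roots $\alpha_1, \dots, \alpha_r$, and let $e_j, f_j, h_j := h_{\alpha_j}$ be the associated Chevalley generators. Under Langlands duality the simple roots of $\gf^\vee$ are the elements $\alpha_j^\vee = h_{\alpha_j}$, with corresponding Chevalley generators $e_j^\vee, f_j^\vee, h_j^\vee := h_{\alpha_j^\vee}$. In ADE type the Cartan matrix $(\alpha_j(h_{\alpha_i}))$ is symmetric, hence equal to its transpose, which is the Cartan matrix of $\gf^\vee$. The Serre--Chevalley presentation then provides a unique Lie algebra isomorphism $\phi : \gf \tof{\sim} \gf^\vee$ with $\phi(e_j) = e_j^\vee$, $\phi(f_j) = f_j^\vee$, and $\phi(h_{\alpha_j}) = h_{\alpha_j}^\vee$ for all $j$.

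The crucial step -- and the place where the ADE assumption is essential -- is to upgrade the coroot condition from simple roots to all roots. The observation is that in the simply-laced case every root has the same Killing length, so the defining formula $h_\alpha = (2/K(\alpha^\sharp, \alpha^\sharp))\,\alpha^\sharp$, where $\sharp : \tf^* \to \tf$ is the isomorphism induced by the Killing form, shows that $\alpha \mapsto h_\alpha$ is the restriction to $R$ of a single $\Cb$-linear map $\tf^* \to \tf$. Consequently, for any expansion $\alpha = \sum_j n_j \alpha_j$ in the simple roots one has $h_\alpha = \sum_j n_j h_{\alpha_j}$, and the same linearity holds in $\gf^\vee$. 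Applying $\phi$ then yields
\[ \phi(h_\alpha) \;=\; \sum_j n_j\, h_{\alpha_j}^\vee \;=\; h_{\sum_j n_j \alpha_j^\vee} \;=\; h_{\alpha^\vee} \;=\; h_\alpha^\vee. \]
The main obstacle I anticipate is precisely this linearity of the coroot map on the root lattice; it fails outside the ADE types (where roots of distinct lengths break the uniform scaling), which clarifies why the hypothesis cannot be relaxed.
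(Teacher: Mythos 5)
Your proof is correct and follows essentially the same route as the paper's: decompose $\gf$ into its center and simple ideals, handle the abelian part by an arbitrary linear isomorphism, and on each simple factor use the symmetry of the Cartan pairing in simply-laced type to produce an isomorphism matching coroots (the paper phrases this as ``$h_\alpha\mapsto h_\alpha^\vee$ is an isomorphism of root systems because $\alpha(h_\beta)=\beta(h_\alpha)$,'' where you instead invoke the Serre--Chevalley presentation for the symmetric Cartan matrix). Your explicit linearity argument showing that the coroot condition propagates from simple roots to all roots --- using that all roots of a simple ADE algebra have equal Killing length --- is a detail the paper leaves implicit, and it correctly isolates where the ADE hypothesis enters.
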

\begin{proof} The Lie algebra $\gf$ decomposes as a sum of simple Lie algebras and an abelian Lie algebra, and Langlands duality respects the decomposition so the isomorphism can be defined piecewise.  For the abelian part any vector space isomorphism will do.  On the semisimple part the map $h_\alpha\mapsto h_\alpha^\vee$ is an isomorphism of root systems precisely because of the relation $\alpha(h_\beta)=\beta(h_\alpha)$.  An isomorphism of root systems determines an isomorphism of Lie algebras, so we are done.
\end{proof}

\begin{rem} Any two such isomorphisms would differ by an automorphism of $\gf$ which fixes each coroot.  If $\gf$ is semisimple, such an automorphism must be inner and furthermore must be implemented by an element of the torus.  At the level of the flag manifold $G/T$, this automorphism corresponds to left multiplication by an element of $T$.  Thus while there is not a canonical isomorphism $G/T\simeq G^\vee/T^\vee$, the difference between any two is no more than the difference between starting with the group $G$ and starting with $tGt^{-1}$ for some $t\in T$.
\end{rem}
\begin{rem} For $G_2$ and $F_4$, there are of course isomorphisms between the Lie algebra and the Langlands dual Lie algebra, but they cannot be chosen of the form specified in Lemma \eqref{L:GoodIsom}.  Because of this, the proof of the following theorem cannot be applied to $G_2$ and $F_4$. \end{rem}
\begin{thm}\label{T:Main}
Let $G$ be a compact Lie group of ADE-type.  Then the principal bundles with $NS$-flux $(G\to G/T,H)$ and $(G^\vee\to G^\vee/T^\vee, H^\vee)$ are $\T$-dual to one another.  Here $T$ is any maximal torus in $G$, and $H$, $H^\vee$ denote the Cartan 3-forms.
\end{thm}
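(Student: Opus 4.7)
The plan is to use Lemma \ref{L:GoodIsom} to identify the base manifolds, form the fibre product $G\times_{G/T}G^\vee$, and show that the restriction of the tautological 2-form of Definition \ref{D:Cartan2} (augmented on a central-torus factor if necessary) satisfies the three conditions of Definition \ref{D:T-duality}. First I would choose a coroot-matching isomorphism $\phi:\gf\to\gf^\vee$, and record the key consequence of the ADE-symmetry $\alpha(h_\beta)=\beta(h_\alpha)$: the pullback $\phi^*\alpha^\vee$ equals $\alpha\in\gf^*$, since on $\tf$ one has $\alpha^\vee(\phi(h_\beta))=\alpha^\vee(h_{\beta^\vee})=\beta(h_\alpha)=\alpha(h_\beta)$, while both forms vanish on $\nf$.

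For the dualizing form, I would take $F$ to be the tautological 2-form restricted to the fibre product; if $G$ has a non-trivial central torus $Z$, one augments $F$ by the pullback of the standard Poincar\'e curvature on $Z\times Z^\vee$. This closed central piece does not interfere with condition (1) because $H$ and $H^\vee$ both vanish along the centre, and it supplies the non-degeneracy along the radical required for condition (3). Conditions (2) and (3) then follow quickly: on a fibre $T\times T^\vee$ the form $F$ is, up to normalization, the standard translation-invariant integral symplectic form attached to the perfect pairing $\Lambda\times\Lambda^\perp\to\Zb$, so its periods are integral, and non-degeneracy of $\sum_\alpha \alpha\wedge \alpha^\vee$ on $\tf_{ss}\oplus\tf_{ss}^\vee$ is immediate from the fact that roots and coroots span the semisimple part.

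The main work will be in condition (1), $dF=\pi^*H-(\pi^\vee)^*H^\vee$. Since every form in sight is $G\times G^\vee$-invariant, the identity need only be checked at the Lie algebra level, on triples $(X_i,X_i^\vee)$ satisfying $\phi(X_{i,\nf})=X_{i,\nf^\vee}^\vee$. Applying the Chevalley--Eilenberg formula and the Maurer--Cartan relation $d\alpha(X,Y)=-\alpha([X,Y])$, and decomposing each input as $X_i=A_i+B_i$ with $A_i\in\tf$ and $B_i\in\nf$, the identity $\phi^*\alpha^\vee=\alpha$ forces $\alpha([X_i,X_j])=\alpha^\vee([X_i^\vee,X_j^\vee])$, so $dF$ collapses to a cyclic sum of expressions $\alpha([B_i,B_j]_\tf)\,\alpha(\widetilde A_k-A_k)$, where $\widetilde A_k:=\phi^{-1}(A_k^\vee)$ measures the torus-coordinate mismatch between the two sides. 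Expanding $\pi^*H-(\pi^\vee)^*H^\vee$ using the orthogonal root-space decomposition of the Killing form together with the identity $K|_\tf=\sum_\alpha \alpha\otimes\alpha$, the torus-mismatch contribution $K(\widetilde A_a-A_a,[B_b,B_c]_\tf)$ comes out of one cyclic slot of $dF$, while the remaining $\gf_\beta$--$\gf_{-\beta}$ bracket pairings cancel between the two sides because $K^\vee\circ(\phi\times\phi)=K$ (any Lie algebra isomorphism preserves the Killing form, and for ADE a coroot-matching $\phi$ exists).

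The hardest step will be organizing this final matching: the calculation is conceptually clean but requires careful bookkeeping of the various root-space contributions, and it is precisely here that the ADE hypothesis is essential---for non-ADE types the identity $\phi^*\alpha^\vee=\alpha$ fails, no $\phi$ of the form supplied by Lemma \ref{L:GoodIsom} exists, and the cancellation does not close up.
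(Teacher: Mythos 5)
Your plan follows the paper's proof essentially step for step: the same dualizing form (the tautological 2-form of Definition \ref{D:Cartan2}, corrected by the Poincar\'e curvature on the central torus in the non-semisimple case), the same reduction of condition (1) to a translation-invariant Lie-algebra identity at the identity via $d\alpha(X,Y)=-\alpha([X,Y])$, and the same essential input, the ADE symmetry $\alpha(h_\beta)=\beta(h_\alpha)$, which you package neatly as $\phi^*\alpha^\vee=\alpha$. Your bookkeeping for condition (1) does close up: by invariance of $K$ the cross terms $K(B_i,[A_j-\widetilde A_j,B_k])$ convert into $K(A_j-\widetilde A_j,[B_k,B_i]_{\tf})$, so $\pi^*H-(\pi^\vee)^*H^\vee$ reduces to the cyclic sum of torus-mismatch terms that you extract from $dF$; the paper does the equivalent computation on an explicit spanning set $\{h_\xi,\,X_\xi+\phi_*X_\xi,\,Y_\xi+\phi_*Y_\xi,\,h_\xi^\vee\}$.

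Two of your justifications are too quick, though. First, non-degeneracy of $F_0=\sum_\alpha\alpha\wedge\alpha^\vee$ on $\tf_{ss}\oplus\tf_{ss}^\vee$ is \emph{not} immediate from the fact that roots and coroots span: a sum $\sum_i c_i\,f_i\otimes f_i$ with spanning $f_i$ can be degenerate if the coefficients $c_i$ have mixed signs. What saves you is positivity: $K(F_0(v),\bar v)=\sum_\alpha \tfrac12 K(h_\alpha,h_\alpha)\,|\alpha(v)|^2$ with each $K(h_\alpha,h_\alpha)>0$ on the real span of the coroots. (The paper's version of this is the Weyl-stabilizer argument showing $F_0(h_\beta)=m_\beta h_\beta$, plus the $4\cos^2\phi\ge 0$ formula to see $m_\beta\neq 0$.) Second, $F_0$ is not, up to normalization, the standard symplectic form of the pairing $\Lambda\times\Lambda^\perp\to\Zb$: its matrix is $\sum_\alpha h_\alpha\otimes h_\alpha$, essentially $K|_{\tf}$, which is generally not a scalar multiple of a unimodular form (this is exactly why the paper cautions that the induced K-theory maps need an extra unimodularity hypothesis). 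Integrality instead follows directly from the fact that each $\alpha/2\pi i$ is an integral 1-form on $T$ because $R\subset\Lambda^\perp$, and likewise for $\alpha^\vee/2\pi i$ on $T^\vee$. Neither issue derails the plan, but both need the correct justification before the argument is complete.
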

\begin{proof}
Let us begin by labeling some maps:
\begin{equation}
\xymatrix{
 & G\times G^\vee\ar@/_/[ldd]_q  \ar@/^/[rdd]^{q^\vee} & \\
& G\times_M G^\vee\ar@{^{(}->}[u]^{i} \ar[ld]^\pi \ar[rd]_{\pi^\vee} & \\
G \ar[rd]_p&  & G^\vee\ar[ld]^{p^\vee} \\
  & M &
}
\end{equation}
Here $M=G^\vee/T^\vee$, and $p$ is the composition $G\to G/T\tof{\phi}M$ for some $\phi$ as in Lemma \eqref{L:GoodIsom}.  All of the other maps in the diagram are the obvious ones.

Let us start with the case that $G$ is semisimple.  We will show that the pullback $i^*F$ implements $\T$-duality between $(G,H)$ and $(G^\vee, H^\vee)$, where $F$ is the tautological 2-form of Definition \eqref{D:Cartan2}.

Thus the goal is to show that $i^*F$ is a dualizing 2-form in the sense of Definition \eqref{D:T-duality}.
First note that $i^*F$ is indeed $T\times T^\vee$-invariant because it is the restriction of the $G\times G^\vee$-invariant form $F$.  Next we verify conditions (2: Integrality) and (3: Nondegeneracy) of Definition \eqref{D:T-duality}.

To see that $F$ is integral, recall that $2\pi i\Lambda\subset \tf$ is the kernel of the exponential maps $\tf\to T$, and elements in $2\pi i \Lambda$ can be identified with loops in $T$.
Therefore the 1-form $\frac{1}{2\pi i}\alpha$ is integral, and so is
\[ F= \sum_{\alpha\in R}\frac{\alpha}{2\pi i}\wedge \frac{\alpha^\vee}{2\pi i} \]
To check nondegeneracy, it suffices (by translation invariance) to check it for the restriction $F_0$ at the special fiber $T\times T^\vee\subset G\times_M G^\vee$.  To exhibit the nondegeneracy of $F_0:=\sum \alpha\wedge \alpha^\vee\in (\tf\otimes\tf^*)^*\subset\Omega^2(T\times T^\vee)^{T\times T^\vee}$, we will view it as an element of $\End(\tf)$ and show that $F_0(h_\beta)$ is a nonzero multiple of $h_\beta$ for each coroot $h_\beta\in C$.  Thus let $h_\beta$ be such a coroot, and denote by $W_\beta\subset W$ the stabilizer of $h_\beta$ in the Weyl group.

Now on the one hand, the only vectors that are stabilized by all of $W_\beta$ are the linear multiples of $h_\beta$, but for any $w\in W_\beta$, we have:
\begin{align*}
 w(F_0(h_\beta))&=\sum_{\alpha\in R} \alpha(h_\beta)w(h_\alpha) \\
                &=\sum_{\alpha\in R}\alpha(w^{-1}(h_\beta))w(h_\alpha)&(\text{using } h_\beta=w^{-1}(h_\beta))\\
                &=\sum_{w(\alpha)\in R} w(\alpha)(h_\beta)h_{w(\alpha)}=F_0(h_\beta).
\end{align*}
so $F_0(h_\beta)=m_\beta h_\beta$ for some number $m_\beta$.  It must be shown that $m_\beta\neq 0$.  To that end, we recall the classic formula for the angle between roots:
\[ \alpha(h_\beta)h_\alpha(\beta)=4\cos^2{\phi} \]
which is of course non-negative, and we calculate:
\begin{align*}
 m_\beta=\frac{1}{2}m_\beta h_\beta(\beta)&=\frac{1}{2}F_0(h_\beta,\beta)& \\
    &=-\frac{1}{8\pi}\sum \alpha(h_\beta)h_\alpha(\beta)< 0
\end{align*}
In fact, using $\alpha(h_\beta)=\beta(h_\alpha)$, it can be quickly shown that $m_\beta=-\frac{1}{8\pi}K(h_\beta,h_\beta)$.

Now to complete the proof (for $G$ semisimple), it only remains to show that
\[  d(i^*F)=\pi^*H-(\pi^\vee)^* H^\vee\ (\ =i^*q^*H-i^*(q^\vee)^* H^\vee\ ). \]
Because $F$, $q^*H$ and $(q^\vee)^*H^\vee$ are translation invariant, it suffices to verify that
\[ dF=q^*H- (q^\vee)^*H^\vee \]
when restricted to the subspace $E_0:=T_{G\times_M G^\vee,(1,1)}\subset T_{G\times G^\vee,(1,1)}= \gf\oplus\gf^\vee$.  (It is not true that $dF$ equals the difference of $H$ and $H^\vee$ on all of $\gf\oplus\gf^\vee$, so the restriction to $E_0$ is indeed necessary.)

Recall that for a left-invariant 1-form $\omega\in \Omega^1(G)^G$, one has $d\omega(X,Y)=-\omega([X,Y])$.  Thus
\[ -4\pi^2\,dF=\sum d\alpha\wedge \alpha^\vee - \sum\alpha\wedge d\alpha^\vee \]
takes a fairly simple form.
Using this, we will calculate the equality directly using a spanning set of $E_0$.  In fact, it is more convenient to work with a spanning set of the complexification of $E_0$; so we will show that the complex linear extensions of $dF$ and $\pi^*H-(\pi^\vee)^* H^\vee$ are equal (working this way also makes the complexified version of this theorem more obvious).

For each $\xi\in R$, we choose $\ X_\xi\in \gf_\xi$, $Y_\xi\in\gf_{-\xi}$ satisfying $[X_\xi,Y_\xi]=h_\xi$.  Then $E_0\otimes\Cb$ is spanned by the set
\[ S:=\{h_\xi,X_\xi+\phi_*X_\xi,Y_\xi+\phi_*Y_\xi,h_\xi^\vee\}_{\xi\in R}, \]
where $\phi_*$ is the Lie algebra map inducing the isomorphism $G/T\simeq G^\vee/T^\vee$.

Because $q_*h^\vee_\alpha=0$ and because the decomposition $\gf=\tf\oplus \nf$ is orthogonal with respect to the Killing form, the only triples of vectors in $S$ not annihilated by $q^*H$ are (up to reordering) those of the form $(h_\xi,X_\zeta+\phi_*X_\zeta,Y_\zeta+\phi_*Y_\zeta)$.  And for these we have
\[ -4\pi^2\,q^*H(h_\xi,X_\zeta+\phi_*X_\zeta,Y_\zeta+\phi_*Y_\zeta)=K(h_\xi,[X_\zeta,Y_\zeta])
    =K(h_\xi,h_\zeta)=\sum_{\alpha\in R}\alpha(h_\xi)\alpha(h_\zeta). \]
Similarly, the only nonzero values of $(q^\vee)^*H^\vee$ are
\begin{align}\label{E:qveeH}
\ \hspace{-1.5in} -4\pi^2\,(q^\vee)^*H^\vee(h^\vee_\xi,X_\zeta+\phi_*X_\zeta,Y_\zeta+\phi_*Y_\zeta)     &=K^\vee(h^\vee_\xi,[\phi_*X_\zeta,\phi_*Y_\zeta]^\vee) \notag\\
       &=K^\vee(h^\vee_\xi,h^\vee_\zeta)\notag\\
       &=\sum_{\alpha^\vee\in R^\vee}\alpha^\vee(h^\vee_\xi)\alpha^\vee(h^\vee_\zeta)\notag\\
       &=\sum_{\alpha\in R}\xi(h_\alpha)\zeta(h_\alpha).
\end{align}

Next, note that the 1-form $q^*\alpha\in\Omega^1(G\times G^\vee)$ vanishes on all vectors in $S$ except possibly the coroots $\{h_\beta\}$, and $q^*\alpha$ satisfies $dq^*\alpha=q^*d\alpha=q^*\alpha([-,-])$.  And $(q^\vee)^*\alpha^\vee$ has analogous properties, so we have
\[ -4\pi^2\,dF= \sum_{\alpha\in R}(q^*\alpha)([-,-])\wedge (q^\vee)^*\alpha^\vee(-)-\sum_{\alpha\in R}(q^*\alpha)(-)\wedge (q^\vee)^*\alpha^\vee([-,-]) \]
The only nonzero values of the first term in $dF$ are
\begin{align}\label{E:dFfirst term}
\ \hspace{-1in} \sum_{\alpha\in R}(q^*\alpha)([X_\xi+\phi_*X_\xi,Y_\xi+\phi_*Y_\xi])\wedge (q^\vee)^*\alpha^\vee(h^\vee_\zeta)
    &=\sum_{\alpha\in R}\alpha(h_\xi)\alpha^\vee(h^\vee_\zeta)\notag \\
    &=\sum_{\alpha\in R}\alpha(h_\xi)\zeta(h_\alpha).
\end{align}
Finally, since $G$ has only $ADE$-type factors, we have $\alpha(h_\xi)=\xi(h_\alpha)$, so that \eqref{E:qveeH} and \eqref{E:dFfirst term} are equal.

That is, the first term of $dF$ equals $(q^\vee)^*H^\vee$.  A similar calculation shows that the second term of $dF$ equals $-q^*H$, so the theorem is proved for $G$ semisimple.

Now assume that $G$ is not semisimple, and decompose its Lie algebra into the abelian and  semisimple summands: $\gf=Z(\gf)\oplus [\gf, \gf]$.
Then the pair $(G,H)$ and $(G^\vee,H^\vee)$ are still $T$-dual with the same Cartan 3-forms, but to the dualizing 2-form $i^*F$, we must add an new term $F_\Pc$.
It is simply the curvature of the Poincar\'e line bundle on the torus $\exp(\af)\subset G$ and its dual $\exp(\af^\vee)\subset G^\vee$.
The point is that $i^*F$ fails the nondegeneracy condition when $G$ is not semisimple, and adding $F_{\Pc}$ simply restores nondegeneracy.  But because $dF_{\Pc}=0$, there is no need to alter the $NS$-fluxes.
\end{proof}
\begin{cor}\label{C:Main} Theorem \eqref{T:Main} is valid with $G$ replaced by its complexification, that is, a complex reductive group of ADE-type, together with its Cartan 3-form as NS-flux, is $\T$-dual to its Langlands dual.
\end{cor}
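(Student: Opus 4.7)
The plan is to exploit the fact that the proof of Theorem \ref{T:Main} was carried out on the complexified tangent space $E_0\otimes \Cb$, so the corollary should require only a formal extension of the compact argument. Concretely, I would extend the isomorphism $\phi\colon \gf\to\gf^\vee$ of Lemma \ref{L:GoodIsom} by complex linearity to $\phi_\Cb\colon\gf_\Cb\to\gf_\Cb^\vee$ and exponentiate to identify the complex flag varieties $G_\Cb/T_\Cb\simeq G_\Cb^\vee/T_\Cb^\vee$; as observed in Section \ref{S:T-duality}, the underlying principal $(\Cb^\times)^n$-bundle structure on $G_\Cb\to G_\Cb/T_\Cb$ restricts to the $U(1)^n$-structure on $G\to G/T$, and conversely it is determined by the compact data together with $G_\Cb$-invariance.

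Next, I would define the complex tautological 2-form $F_\Cb$ on $G_\Cb\times G_\Cb^\vee$ by the same formula as in Definition \ref{D:Cartan2}, now viewing the roots $\alpha$ as $\Cb$-linear forms on $\gf_\Cb$; similarly, $H_\Cb$ is the complex-bilinear extension of $H$, as noted just before Lemma 3.2. The claim is that $i^*F_\Cb$ is a dualizing 2-form in the sense of Definition \ref{D:T-duality}. The differential identity $dF_\Cb=\pi^*H_\Cb-(\pi^\vee)^*H_\Cb^\vee$ is exactly what was verified in the proof of Theorem \ref{T:Main}, since that calculation was performed on the spanning set $S$ of $E_0\otimes\Cb$ and relied only on the ADE-condition $\alpha(h_\xi)=\xi(h_\alpha)$. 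Integrality persists because the kernel of $\exp\colon\tf_\Cb\to T_\Cb$ is still $2\pi i\Lambda$, so each $\frac{1}{2\pi i}\alpha$ remains an integral $1$-form on $T_\Cb$. For nondegeneracy, the Weyl-equivariance argument shows $F_{\Cb,0}(h_\beta)=-\frac{1}{8\pi}K(h_\beta,h_\beta)\,h_\beta\neq 0$ for every coroot $h_\beta$, and since $\{h_\beta\}$ spans $\tf_\Cb$ and $F_{\Cb,0}$ is diagonalizable with nonzero eigenvalues, the induced map on the complex vertical tangent bundle is nondegenerate.

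For the non-semisimple case one proceeds exactly as at the end of the proof of Theorem \ref{T:Main}: decompose $\gf_\Cb=Z(\gf_\Cb)\oplus[\gf_\Cb,\gf_\Cb]$ and correct $i^*F_\Cb$ by adding the (closed) curvature $F_{\Pc}$ of a Poincar\'e bundle on $\exp(Z(\gf_\Cb))\times\exp(Z(\gf_\Cb)^\vee)$ to restore nondegeneracy on the central factor, without altering the NS-fluxes.

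The only genuine obstacle I anticipate is a bookkeeping one rather than a computational one: Definition \ref{D:T-duality} was stated for compact principal torus bundles, so one must be a little careful that the notions of $T$-invariance, integrality of periods, and nondegeneracy of $F$ make sense and are the correct analogues for principal $(\Cb^\times)^n$-bundles. This is precisely the issue addressed by the opening remark of Section \ref{S:T-duality}, and the upshot is that the complex data are uniquely determined by (and reduce to) the compact data already handled in Theorem \ref{T:Main}; once this identification is made explicit, the corollary follows with no further calculation.
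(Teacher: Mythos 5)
Your proposal is correct and follows essentially the same route as the paper: the authors' entire proof is the observation that the dualizing $2$-form of Theorem \eqref{T:Main} already serves as a dualizing form for the complexification, precisely because the key computation was carried out on the complexified spanning set of $E_0\otimes\Cb$ and the Cartan form of $G_\Cb$ is the complex-linear extension of that of $G$. You have simply spelled out the details that the paper leaves implicit.
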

\begin{proof} This is because the dualizing form of Theorem \eqref{T:Main} is also a dualizing form for the complexification of a compact Lie group.
\end{proof}
\begin{cor} Theorem \eqref{T:Main} is also true using any nonzero multiple of the Cartan forms, that is, for $0\neq n\in\Zb$, the pair $(G,nH)$ and $(G^\vee, nH)$ are $\T$-dual\end{cor}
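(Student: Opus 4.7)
The plan is to verify that $F_n := n \cdot i^*F$ serves as a dualizing 2-form for the pair $(G, nH)$ and $(G^\vee, nH^\vee)$, where $i^*F$ is the dualizing 2-form constructed in the proof of Theorem \eqref{T:Main}. This reduces the corollary to checking that each of the three conditions in Definition \eqref{D:T-duality} behaves well under scaling by a nonzero integer.

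First, I would check the differential condition: by linearity of $d$, one has $dF_n = n \cdot d(i^*F) = n(\pi^*H - (\pi^\vee)^*H^\vee) = \pi^*(nH) - (\pi^\vee)^*(nH^\vee)$, where the middle equality is precisely the identity established in the proof of Theorem \eqref{T:Main}. Second, integrality of the fiber restriction $(F_n)_m = n \cdot (i^*F)_m$ is immediate from integrality of $(i^*F)_m$ together with $n \in \Zb$. Third, the pairing on vertical tangent bundles induced by $F_n$ is just $n$ times the pairing induced by $i^*F$, hence remains nondegenerate since $n \neq 0$. One also needs $nH$ and $nH^\vee$ to be legitimate NS-fluxes, but this is automatic: closedness and invariance are linear conditions, and integrality is preserved under multiplication by integers.

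In the non-semisimple case, Theorem \eqref{T:Main} uses the augmented dualizing form $i^*F + F_\Pc$; the same strategy works by scaling the entire form to $n(i^*F + F_\Pc)$, since $F_\Pc$ is closed and integral (so scaling it does not disturb the differential equation or integrality, while nondegeneracy is still restored for any $n \neq 0$). I do not foresee any real obstacle: the corollary is essentially a formal consequence of the linearity of the $\T$-duality data, together with the observation that multiplication by a nonzero integer preserves both integrality of periods and nondegeneracy of the vertical pairing.
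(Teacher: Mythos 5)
Your proposal is correct and follows exactly the paper's own argument: the paper's proof is the one-line statement that one uses $n$ times the dualizing 2-form of Theorem \eqref{T:Main}, and your verification of the three conditions of Definition \eqref{D:T-duality} under scaling (including the non-semisimple case with $F_\Pc$) simply makes explicit what that one line asserts.
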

\begin{proof} One simply uses $n$-times the dualizing 2-form of Theorem \eqref{T:Main}.\end{proof}

\newpage


\begin{thebibliography}{99}
\bibitem[BEM]{BEM} Bouwknegt, Peter; Evslin, Jarah; Mathai, Varghese; \emph{T-duality: topology change from H-flux.} Comm. Math. Phys. 249 (2004), no. 2, 383-415. arXiv:hep-th/0306062
\bibitem[BHM]{BHM} Bouwknegt, Peter; Hannabuss, Keith; Mathai, Varghese; \emph{T-duality for principal torus bundles.} J. High Energy Phys. 2004, no. 3, 018, 10 pp.   arXiv:hep-th/0412268
\bibitem[BN]{BN} Bunke, Ulrich; Nikolaus, Thomas; \emph{T-duality via Gerby Geometry and Reductions} arXiv:1305.6050
\bibitem[BS]{BS} Bunke, Ulrich; Schick, Thomas; \emph{On the topology of T-duality.} Rev. Math. Phys. 17 (2005), no. 1, 77-112.
\bibitem[Bus]{Bus} Buscher, T. H.; \emph{A symmetry of the string background field equations.} Phys. Lett. B 194 (1987), no. 1, 59-62.
\bibitem[Che]{Che} Chevalley, Claude; \emph{Classification des groupes alg\'ebriques semi-simples.} Collected works. Vol. 3.  Springer-Verlag, Berlin, 2005.
\bibitem[Dae]{Dae} Daenzer, Calder; \emph{A groupoid approach to noncommutative T-duality.} Comm. Math. Phys. 288 (2009), no. 1, 55-96.
\bibitem[SGA]{SGA} Demazure, Michel; \emph{Structures alg\'ebriques, cohomologie des groupes. Sch\'emas en Groupes} Sem. G\'eometrie Alg\'ebrique, Inst. Hautes \'Etudes Sci., 1963.
\bibitem[DP]{DP} Donagi, R.; Pantev, T.; \emph{Langlands duality for Hitchin systems.} Invent. Math. 189 (2012), no. 3, 653-735.
\bibitem[GC]{GC} Cavalcanti, Gil R.; Gualtieri, Marco; \emph{Generalized complex geometry and T-duality.} A celebration of the mathematical legacy of Raoul Bott, 341-365, CRM Proc. Lecture Notes, 50, Amer. Math. Soc., Providence, RI, 2010.
\bibitem[HT]{HT} Hausel, Tam\'as; Thaddeus, Michael; \emph{Mirror symmetry, Langlands duality, and the Hitchin system.} Invent. Math. 153 (2003), no. 1, 197-229.
\bibitem[Kna]{Kna} Knapp, Anthony W.; \emph{Lie groups beyond an introduction.} Progress in Mathematics, 140. Birkh\"auser Boston, Inc., Boston, MA, 1996.
\bibitem[Lee]{Lee} Lee, Dong Hoon; \emph{The structure of complex Lie groups.} Research Notes in Mathematics, 429, 2002.
\bibitem[MR]{MR} Mathai, Varghese; Rosenberg, Jonathan; \emph{T-duality for torus bundles with H-fluxes via noncommutative topology.} Comm. Math. Phys. 253 (2005), no. 3, 705-721.
\bibitem[Ser]{Ser} Serre, Jean-Pierre; \emph{Complex semisimple Lie algebras.} Translated from the French by G. A. Jones. Springer-Verlag, New York, 1987.
\bibitem[Ste]{Ste} Steinberg, Robert; \emph{The isomorphism and isogeny theorems for reductive algebraic groups.} J. Algebra 216 (1999), no. 1, 366-383.
\end{thebibliography}
\end{document}